\newtheorem{theorem}{Theorem}[section]
\newtheorem{corollary}{Corollary}[section]
\newtheorem{proposition}{Proposition}[section]
\newtheorem{lemma}[theorem]{Lemma}
\newtheorem{claim}{Claim}[section]
\theoremstyle{definition}
\newtheorem{definition}[theorem]{Definition}
\theoremstyle{remark}
\newtheorem{remark}[theorem]{Remark}
\numberwithin{equation}{section}
\newcommand{\so}{{s_0}}
\newcommand{\gt}{\gamma_T}
\newcommand{\gr}{\gamma_R}
\newcommand{\otp}{[0,2\pi]}
\newcommand{\Rst}{\mathcal{R}^*}
\newcommand\Db{\mathrm{D}_{b}}  %Disk of radius b centered at origin
\newcommand{\abs}[1]{\lvert#1\rvert}
\newcommand{\T}[1]{T_{#1}}
\newcommand{\zero}{\boldsymbol{0}}
\newcommand{\rr}{\mathbb{R}}
\newcommand{\smo}{{\setminus \{\boldsymbol{0}}\}}
\newcommand{\tZ}{\tilde{Z}}
\newcommand{\pmpM}{{(\phi_{\min}(L),\phi_{\max}(L))}}
\newcommand{\inv}{^{-1}}
\newcommand{\dx}{\mathbf{dx}}
\newcommand{\om}{\omega}
\newcommand{\rtwo}{{\mathbb R}^2}
\newcommand{\tred}[1]{{\color{red}{#1}}}
\newcommand{\tc}[1]{\begin{quotation}\textbf{Todd's comment:\
}{\textit{#1}}\end{quotation}}
\newcommand{\vc}[1]{\begin{quotation}\textbf{Venky's comment:\
}{\textit{#1}}\end{quotation}}
\newcommand{\pf}{\begin{proof}}
\newcommand{\epf}{\end{proof}}
\newcommand{\st}{\hskip 0.3mm\big | \hskip 0.3mm}
\renewcommand{\th}{\theta}
\newcommand{\be}{\begin{equation}}
\newcommand{\ee}{\end{equation}}
\newcommand{\bea}{\begin{eqnarray}}
\newcommand{\eea}{\end{eqnarray}}
\newcommand{\bean}{\begin{eqnarray*}}
\newcommand{\eean}{\end{eqnarray*}}
\newcommand{\bel}[1]{\begin{equation}\label{#1}}
\newcommand{\eel}[1]{{\label{#1}\end{equation}}}
\newcommand{\supp}{\operatorname{supp}}
\newcommand{\cC}{{\mathcal{C}}}
\newcommand{\cD}{{\mathcal{D}}}
\newcommand{\cE}{{\mathcal{E}}}
\newcommand{\cR}{{\mathcal{R}}}
\newcommand{\shrs}{\sinh^2\rho}
\newcommand{\chrs}{\cosh^2\rho}
\newcommand{\shr}{\sinh\rho}
\newcommand{\chr}{\cosh\rho}
\newcommand{\Cc}{\mathcal{C}}
\newcommand{\Dc}{\mathcal{D}}
\newcommand{\Ec}{\mathcal{E}}
\newcommand{\Rc}{\mathcal{R}}
\newcommand{\Beq}{\begin{equation}}
\newcommand{\Eeq}{\end{equation}}
\newcommand{\beq}{\begin{equation*}}
\newcommand{\eeq}{\end{equation*}}
\newcommand{\bal}{\begin{align}}
\newcommand{\eal}{\end{align}}
\newcommand{\g}{\gamma}
\newcommand{\bp}{\begin{prob}}
\newcommand{\ep}{\end{prob}}
\newcommand{\bpr}{\begin{proof}}
\newcommand{\epr}{\end{proof}}
\newcommand{\md}{\mathrm{d}}
\author[G. Ambartsoumian]{Gaik Ambartsoumian}
 \address{Department of Mathematics, University of Texas, Arlington,
 TX, USA}
 \email{gambarts@uta.edu}
\author[V.P. Krishnan]{Venkateswaran P.\ Krishnan}
\address{Tata Institute of Fundamental Research Centre for Applicable Mathematics, Bangalore, India}
\email{vkrishnan@math.tifrbng.res.in}
\author[E.T. Quinto]{Eric Todd Quinto}
\address{Department of Mathematics, Tufts University, Medford,
 MA 02155, USA}
 \email{todd.quinto@tufts.edu}
 \subjclass[2010]{Primary: 44A12, 92C55, 35S30, 35S05 Secondary:
58J40, 35A27}
\keywords{Reflection tomography, Radon transform, singular
pseudodifferential operators, Fourier integral operators}
\begin{document}
\begin{abstract}
  In this article, we consider a generalized Radon transform that comes up
  in ultrasound reflection tomography. In our model, the ultrasound emitter
  and receiver move at a constant distance apart along a circle.  We
  analyze the microlocal properties of the transform $\Rc$ that arises from
  this model. As a consequence, we show that for distributions with support
  sufficiently inside the circle, $\Rc^*\Rc$ is an elliptic
  pseudodifferential operator of order $-1$ and hence all the
    singularities of such distributions can be recovered.
\end{abstract}

%the unique recovery of a function from this transform with the
%assumption of smallness of its support.

\title[Microlocal Analysis of an Ultrasound Transform]{Microlocal
Analysis of an Ultrasound Transform with Circular Source and Receiver
Trajectories}

\maketitle

\section{Introduction}\label{sect:intro}

Ultrasound reflection tomography (URT) is one of the safest and most
cost effective modern medical imaging modalities (e.g. see
\cite{elliptic-ultrasound, elliptic-ultrasound-2, ref:Norton2D,
ref:Norton3D} and the references there). During its scanning process,
acoustic waves emitted from a source reflect from inhomogeneities
inside the body, and their echoes are measured by a receiver. This
measured data is then used to recover the unknown ultrasonic
reflectivity function, which is used to generate cross-sectional
images of the body.

%holds well for example

In a typical setup of ultrasound tomography, the emitter and receiver
are combined into one device (transducer). The transducer emits a
short acoustic pulse into the medium, and then switches to receiving
mode, recording echoes as a function of time. Assuming that the medium
is weakly reflecting (i.e. neglecting multiple reflections), and that
the speed of sound propagation $c$ is constant\footnote{This
assumption is reasonable  in ultrasound mammography, since the
speed of sound is almost constant in soft tissue.}, the echoes
measured at time $t$ uniquely determine the integrals of the
reflectivity function over concentric spheres centered at the
transducer location and radii $r=ct/2$ (see Fig.\,\ref{fig1}\,(a)
below, \cite{ref:Norton3D} and the references there). By focusing the
transducer one can consider echoes coming only from a certain plane,
hence measuring the integrals of the reflectivity function in that
plane along circles centered at the transducer location
\cite{ref:Norton2D}. Moving the transducer along a curve on the edge
of the body, and repeating the measurements one obtains a
two-dimensional family of integrals of the unknown function along
circles. Hence the problem of image reconstruction in URT can be
mathematically reduced to the problem of inverting a circular Radon
transform, which integrates an unknown function of two variables along
a two-dimensional family of circles.

In the case when the emitter and receiver are separated, the echoes
recorded by a transducer correspond to the integrals of the
reflectivity function along confocal ellipses with foci corresponding
to the locations of the emitter and receiver (see
Fig.\,\ref{fig1}\,(b)). While this more general setup has been
gaining popularity in recent years (e.g. see
\cite{elliptic-ultrasound, elliptic-ultrasound-2}), the mathematical
theory related to elliptical Radon transforms is relatively
undeveloped. In this paper we study the microlocal properties of this
transform which integrates an unknown function along a family of
ellipses.

\begin{figure*}
\centering
\begin{tabular}{cc}
\subfigure[]{\includegraphics[height=1.6in]{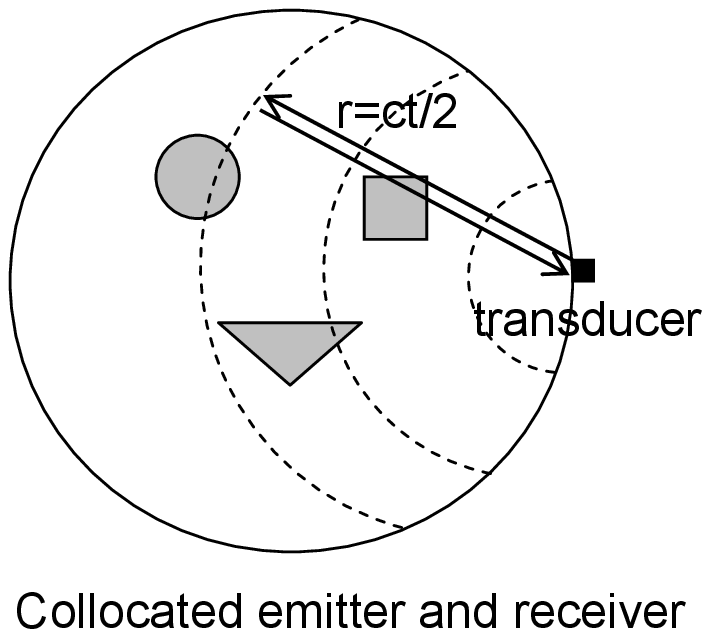}}
\subfigure[]{\includegraphics[height=1.6in]{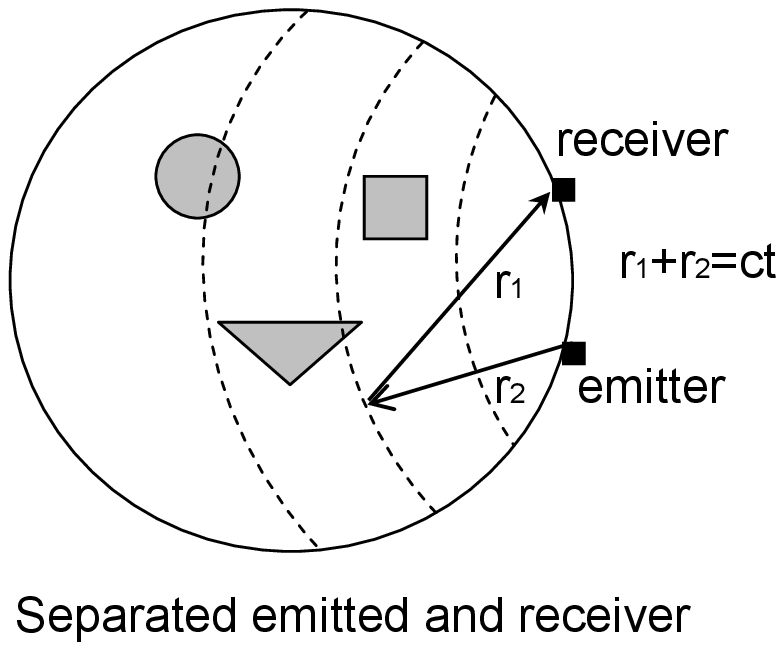}}
\end{tabular}
\caption{A sketch of integrating curves in URT} \label{fig1}
\end{figure*}

\section{Definitions and Preliminaries}\label{sect:definitions}

We will first define the elliptical Radon transform we consider,
provide the general framework for the microlocal analysis of this
transform, and show that our transform fits within this framework.
\def\mapsw#1{ \llap{$\myvcenter{\hbox{$\scriptstyle#1$}}$}
\mathlarger{\swarrow}}
\def\mapse#1{\mathlarger{\mathbf{\searrow}}\rlap{$\myvcenter{\hbox{$\scriptstyle#1$}}$}}
\newcommand{\myvcenter}[1]{\ensuremath{\raisebox{6pt}{\hbox{#1}}}}
\subsection{The Elliptical Transform}\label{sect:ell} In URT the
locations of the emitter and receiver are limited to a curve
surrounding the support of the function to be recovered and the data
taken can be modeled as integrals of the reflectivity function of the
object over ellipses with foci being the transmitter and the receiver.
In this paper we consider this curve to be a circle, which is both the
simplest case mathematically and the one most often used in practice.
By using a dilation, we can assume the circle has radius $r=1$. We
also make the restriction that the source and detector rotate around
the circle a fixed distance apart.  We denote the fixed difference
between the polar angles of emitter and receiver by $2\alpha$, where
$\alpha\in (0,\pi/2)$ (see Fig. \ref{fig-domain}) and define
\bel{def:ab}a=\sin\alpha,\qquad b=\cos\alpha .\ee

As we will see later our main result relies on the assumption that the support of the function is small enough. More precisely, we will assume our function is supported in the ball
\[\Db=\{x\in \rtwo\st |x|<b\}.\]

We parameterize the trajectories of the emitter and receiver, respectively, as 
\[
\begin{aligned}\gt(s) &=
(\cos(s-\alpha),\sin(s-\alpha))\\
 \gr(s) &=
(\cos(s+\alpha),\sin(s+\alpha))\quad \text{for $s\in
[0,2\pi].$}\end{aligned}
\]

Thus, the emitter and receiver rotate around the unit circle and
are always $2a$ units apart.  For $s\in \otp$ and $L>2a$, let
%\bel{def:E}
\[
E(s,L) = \{x\in \rtwo\st |x-\gt(s)|+|x-\gr(s)|=L\}.
\]
 Note
that the center of the ellipse $E(s,L)$ is $(b\cos s,b\sin s )$ and
$L$ is the diameter of the major axis of $E(s,L)$, the so called
\emph{major diameter}.  This is why we require $L$ to be greater than
the distance between the foci, $2a$.  Note that $E(0,L)=E(2\pi,L)$ and
so $\otp$ can be viewed as the unit circle.

\begin{figure*}
\centering
\includegraphics[height=1.6in] {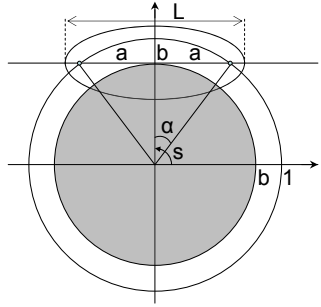}
\caption{A sketch of the domain and the notations} \label{fig-domain}
\end{figure*}

Let %\bel{def:Y} 
\[
Y=\{(s,L)\st s\in \otp,\ L>2a\},
\]
then $Y$ is the
set of parameters for the ellipses.

\begin{definition}\label{def:Rf}
Let $(s,L)\in Y$.  The elliptical Radon transform of a locally
integrable function $f:\rtwo\to\rr$ is defined as
\[
\mathcal{R}f(s,L)=\int\limits_{x\in E(s,L)} f(x) \md l(x)
\]
where $\md l$ is the arc length measure on the ellipse
$E(s,L)$.
\end{definition}
The backprojection transform is defined for $g\in C_c(Y)$ as
%\bel{def:Rstar} 
\[
\Rst g(x) = \int_{s\in \otp}
g(s,|x-\gr(s)|+|x-\gt(s)|)\md s
\]
 where the positive smooth measure $\md s$ is chosen so that $\Rst$ is
the $L^2$ adjoint of $\Rc$.  Note that $\Rst$ integrates over a
compact set, $\otp$ (or the circle), and so $\Rst$ can be composed
with $\Rc$.  Using the parametrization of ellipses $(s,L)$ one sees
that $\Rst g(x)$ integrates (with a smooth measure) over the set of
all ellipses passing through $x$.

We now briefly discuss the general framework of Guillemin and
Sternberg into which our elliptical Radon transform falls. We use this
to understand the microlocal analysis of $\Rc$.  We begin with some
general notation we will use when we discuss microlocal analysis.

\subsection{Microlocal Definitions}\label{sect:micro-def}
Let $X$ and $Y$ be smooth manifolds and let \[\cC
\subset T^*(Y)\times T^*(X), \] then we let
 \[\cC' = \{(y,\eta,x,\xi)\st (y,\eta,x,-\xi)\in \cC\}.\]
 The transpose relation is $\cC^t
\subset T^*(X)\times T^*(Y)$:
\[\cC^t = \{(x,\xi,y,\eta)\st (y,\eta,x,\xi)\in \cC\}\]

If $\cD\subset T^*(X)\times
T^*(Y)$, then the composition $\cD\circ\cC$ is  defined
\[\begin{aligned}\cD\circ \cC &= \{(x',\xi',x,\xi)
\st \exists (y,\eta)\in T^*(Y)\\
&\qquad\text{with }
(x',\xi',y,\eta)\in \cD,\ (y,\eta,x,\xi)\in \cC\}.\end{aligned}\]

\subsection{The Radon Transform and Double
Fibrations}\label{sect:micro-RT} Guillemin first put the Radon
transform into a microlocal framework, and we will use this approach
to prove Theorem \ref{thm:bolker}.  These results were first given in
the technical report \cite{Gu1975} (some of which appeared in
\cite{Gu1985}), then outlined in \cite[pp.\ 336-337, 364-365]{GS1977}.
The dependence on the measures and details of the proofs for the case
of equal dimensions were given in \cite{Q1980}.  Guillemin used the
ideas of push-forwards and pullbacks to define Radon transforms in
\cite{Gu1975} and he used these ideas to define Fourier integral
operators (FIOs) \cite{Gu1975, GS1977}.  Finally Guillemin summarized
this material in \cite{Gu1985}.

Given smooth connected manifolds $X$ and $Y$ of the same dimension, let
$Z\subset Y\times X$ be a smooth connected submanifold of codimension
$k<\dim(X)$.  We assume that the natural projections
\bel{doublefibration-genl} \def\mapsw#1{
\llap{$\myvcenter{\hbox{$\scriptstyle#1$}}$} \mathlarger{\swarrow}}
\def\mapse#1{\mathlarger{\mathbf{\searrow}}\rlap{$\myvcenter{\hbox{$\scriptstyle#1$}}$}}
{\begin{matrix}& Z&\\
&\mathlarger{\mapsw{\mathsmaller{\pi_L}}
\ \qquad\mapse{\mathsmaller{\pi_R}}}&\\
{Y} &&{X}&\end{matrix}} \ee are both fiber maps.
In this case, we call \eqref{doublefibration-genl} a \emph{double
fibration.}

Following Guillemin and Sternberg, we assume that $\pi_R$ is a
\textit{proper map;} that is, the fibers of $\pi_R:Z\to X$ are
compact.  The double fibration allows us to define a Radon transform
as follows.  For each $y\in Y$ let \[E(y) =
\pi_R\left(\pi_L\inv(\{y\})\right),\] then the sets $E(y)$ are all
diffeomorphic to the fiber of the fibration $\pi_L:Z\to Y$.  For each
$x\in X$ let \[F(x) = \pi_L\left(\pi_R\inv(\{x\})\right),\] and the
sets $F(x)$ are all diffeomorphic to the fiber of $\pi_R:Z\to X$.
Since $\pi_R$ is proper, the sets $F(x)$ are compact.  By choosing
smooth nowhere zero measures on $Z$, $X$, and on $Y$, one can then
define a smooth nowhere zero measure $\mu_y$ on $E(y)$ and $\mu_x$ on
$F(x)$ and a Radon transform
\[R f(y) = \int _{x\in E(y)} f(x) \md\mu_y(x)\]
and the dual transform is
\[R^* g(x) = \int_{y\in F(x)} g(y) \md\mu_x(y)\]
\cite{GS1977} (see also \cite[p.\ 333]{Q1980}).  Since the sets $F(x)$
are compact, one can compose $R^*$ and $R$ for $f\in C_c(X)$.  If
$\pi_R$ is not a proper map, then one needs cutoff functions to
compose $R^*$ with $R$.  We assume $E(y_1)=E(y_2)$ if and only if
$y_1=y_2$ and similarly for $F$.

Guillemin showed (\cite{Gu1975, Gu1985} and with Sternberg
\cite{GS1977}) that  $R$ is a Fourier integral distribution
associated with integration over $Z$
and canonical relation $\cC = (N^*(Z)\smo)'$.
\begin{comment}
%canonical relation \[\cC = (N^*(Z)\smo)'\,.\]
\end{comment}
To understand the properties of $R^*R$, one must investigate the
mapping properties of $\cC$.  Let $\Pi_L:\cC\to T^*(Y)$ and
$\Pi_R:\cC\to T^*(X)$ be the natural projections.  Then we have the
following diagram: \bel{micro-diagram-genl} \def\mapsw#1{
\llap{$\myvcenter{\hbox{$\scriptstyle#1$}}$} \mathlarger{\swarrow}}
\def\mapse#1{\mathlarger{\mathbf{\searrow}}\rlap{$\myvcenter{\hbox{$\scriptstyle#1$}}$}}
{\begin{matrix}& \cC&\\
&\mathlarger{\mapsw{\mathsmaller{\Pi_L}}
\ \qquad\mapse{\mathsmaller{\Pi_R}}}&\\
{T^*(Y)} &&{T^*(X)}&\end{matrix}} \ee This diagram is the microlocal
version of \eqref{doublefibration-genl}.

\begin{definition}[{\cite{Gu1975, Gu1985}}]\label{def:Bolker} Let
$X$ and $Y$ be manifolds with $\dim(Y)= \dim(X)$ and let
$\cC\subset (T^*(Y)\times T^*(X))\smo$ be a canonical relation.
Then, $\cC$ \emph{satisfies the Bolker Assumption} if \[\Pi_Y:\cC\to
T^*(Y)\] is an injective immersion. \end{definition}

This definition was originally proposed by Guillemin
\cite{Gu1975},\cite[p. 152]{Gu1985}, \cite[p.\ 364-365]{GS1977}
because a similar assumption for the Radon transform on finite sets
implies $R^*R$ is injective in this case.  Guillemin proved
that if the measures that define the Radon transform are smooth and
nowhere zero, and if the Bolker Assumption holds (and $R$ is
defined by a double fibration for which $\pi_R$ is proper), then
$R^*R$ is an elliptic pseudodifferential operator.

In the definition in \cite{Gu1975, Q1980}, the dimensions of $X$ and
$Y$ are equal, but in \cite{Gu1985}, $\dim(Y)\geq \dim(X)$.  We use
the former definition since, in our case, $\dim(Y) = \dim(X)$.  
Since we assume $\dim(Y)=\dim(X)$, if $\Pi_Y:\cC\to T^*(Y)$ is an
injective immersion, then $\Pi_Y$ maps to $T^*(Y)\smo$ and $\Pi_X$ is
also an immersion \cite{Ho1971}.  Therefore, $\Pi_X$ maps to
$T^*(X)\smo$. So, under the Bolker Assumption, $\cC\subset
(T^*(Y)\smo) \times (T^*(X)\smo)$ and so $R$ is a Fourier integral
operator according to the definition in \cite{Tre}.

\begin{comment}
\tred{\vc{Should we have ``(and another surjectivity assumption
holds)'' in the sentence above? My understanding is that the
surjectivity in question follows as a consequence of double fibration
on the base space together with the non-vanishing of measures. Am I
missing something here? }}

\tc{Is the change above in red OK.  That's what I meant, but you are
right, making it explicit that we assume $R$ is defined by a double
fibration for which $\pi_R$ is proper is good.}
\end{comment}

% \tc{\tred{Do I remember right, Venky, that you saw the Bolker
% assumption being $\Pi_L$ is a diffeomorphism, or was there anything
% about the surjectivity of $\Pi_R$ in later definitions you found of
% Bolker? I ask because, at least theoretically, without some other
% assumption, such as $\Pi_R$ is surjective to $T^*(X)\smo$ I'm not sure
% we can say $\cC^t\circ \cC = \Delta$.  As I think more about it, I
% think it must follow from the fact that $\pi_R$ is proper, so the
% fibers of $\pi_R$ are compact, which is true in our case (something
% like $\Pi_R$ of the sphere bundle of the fiber has to be a compact set
% of the sphere bundle of $T^*_x(\Db)$ (when you normalize so the
% projection maps to the sphere bundle) and the projection must be a
% local diffeomorphism).  Very interesting! What do you think?} }

For our transform $\Rc$, the
\emph{incidence relation} is \bel{def:Z} Z=\{(s,L,x)\subset Y\times
\Db \st x\in E(s,L)\}. \ee The double fibration is
\bel{doublefibration-R} \def\mapsw#1{
\llap{$\myvcenter{\hbox{$\scriptstyle#1$}}$} \mathlarger{\swarrow}}
\def\mapse#1{\mathlarger{\mathbf{\searrow}}\rlap{$\myvcenter{\hbox{$\scriptstyle#1$}}$}}
{\begin{matrix}& Z&\\
&\mathlarger{\mapsw{\mathsmaller{\pi_L}}
\ \qquad\mapse{\mathsmaller{\pi_R}}}&\\
{Y} &&{\Db}&\end{matrix}} \ee and both projections are fiber maps.
Note that the fibers of $\pi_L$ are ellipses,
$E(s,L)=\pi_R(\pi_L\inv(\{(s,L)\})$.  Furthermore, the fibers of
$\pi_R$ are diffeomorphic to $\otp$ or the circle, so $\pi_R$ is
proper.  Therefore $\Rc$ and $\Rst$ satisfy the conditions outlined
above so that Guillemin and Sternberg's framework can be applied.

\section{The Main Result}
We now state the main result of this article.

\begin{theorem}\label{thm:bolker}
  Let $\alpha>0$ be a constant and let $\g_{T}(s)=( \cos (s-\alpha),
  \sin(s-\alpha))$ and $\g_{R}(s)=( \cos(s+\alpha),\sin(s+\alpha))$ for
  $s\in \otp$ be the trajectories of the emitter and receiver
  respectively. Denote by $\Ec'(\mathrm{D}_{b})$ the space of compactly
  supported distributions supported in the disc, $\Db$, of radius $b$
  centered at $0$, where $b=\cos \alpha$. The elliptical Radon transform
  $\Rc$ when restricted to the domain $\Ec'(\mathrm{D}_{b})$ satisfies the
  microlocal conditions in Section \ref{sect:micro-RT}, and $\Rc$ is an
  elliptic Fourier integral operator (FIO) of order $-1/2$.  Let
  $\cC\subset T^*(Y)\times T^*(\Db)$ be the canonical relation associated
  to $\Rc$.  Then, $\cC$ satisfies the Bolker Assumption (Definition
  \ref{def:Bolker}).
\end{theorem}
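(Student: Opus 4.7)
The plan is to realize $\Rc$ as an FIO by writing the incidence relation as the zero set of a defining function $\Phi$, read off the canonical relation from $\Phi$, and then verify the Bolker Assumption through an explicit one-parameter calculation along each ellipse in which the restriction $x\in\Db$ is used in an essential way.

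First I would write \eqref{def:Z} as $Z=\Phi^{-1}(0)$ for
$$\Phi(s,L,x)=|x-\gt(s)|+|x-\gr(s)|-L.$$
Since $\partial_L\Phi\equiv -1$, the differential $d\Phi$ never vanishes, so $Z$ is a smooth codimension-one submanifold of $Y\times\Db$. The fibers of $\pi_L$ are the ellipses $E(s,L)$, and the fiber of $\pi_R$ over any $x\in\Db$ is parameterized by $s\in\otp$ (with $L$ determined by the ellipse equation), hence is compact. So $\pi_R$ is proper and the double fibration \eqref{doublefibration-R} satisfies the hypotheses of Section~\ref{sect:micro-RT}. By Guillemin--Sternberg, $\Rc$ is a Fourier integral distribution associated with
$$\cC=\bigl\{\bigl(s,L,\sigma\,d_{(s,L)}\Phi;\,x,\,-\sigma\,d_x\Phi\bigr):\Phi(s,L,x)=0,\ \sigma\ne 0\bigr\}'.$$
On $Z$, $d_x\Phi$ is the sum of the unit vectors from the two foci to $x$ and is nonzero, so $\cC\subset(T^*(Y)\smo)\times(T^*(\Db)\smo)$. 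The principal symbol is a constant multiple of the smooth nowhere-vanishing density $\md l$ on $Z$, giving ellipticity, and the order $-1/2$ comes from the standard dimension count for a codimension-one double fibration between $2$-manifolds.

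For the Bolker Assumption, $\dim\cC=4=\dim T^*(Y)$, so it suffices to show that $\Pi_Y$ is globally injective and has nowhere vanishing differential. By rotational invariance I may fix $s=0$; then $E(0,L)$ has foci $(b,\pm a)$, center $(b,0)$, and is parameterized by $x(\theta)=(b+c\cos\theta,(L/2)\sin\theta)$ with $c=\sqrt{(L/2)^2-a^2}$. Using $\gt'(0)=J\gt(0)$ and $\gr'(0)=J\gr(0)$ (with $J$ the $\pi/2$ rotation), together with the simplifications $|x-\gt|=L/2+a\sin\theta$ and $|x-\gr|=L/2-a\sin\theta$, the restriction of $\partial_s\Phi$ to $E(0,L)$ reduces to
$$F(\theta)=\frac{2c\sin\theta\,(a^2\cos\theta-bc)}{c^2+a^2\cos^2\theta}.$$
The arc $E(0,L)\cap\Db$ is the symmetric neighborhood of $\theta=\pi$ cut out by $2bc\cos\theta+c^2+a^2\sin^2\theta<0$. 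Writing $\theta=\pi+\phi$ shows $F$ is odd in $\phi$; computing $F'(\pi)=2c(a^2+bc)/(c^2+a^2)>0$ and showing that $F'(\theta)$ retains this sign throughout the arc yield both strict monotonicity (hence global injectivity of $F$, hence of $\Pi_Y$, since the odd symmetry only pairs values equal to $\pm F(\pi)=0$) and, since the Jacobian of $\Pi_Y$ in coordinates $(s,L,\theta,\sigma)$ equals $-\sigma F'(\theta)$, the immersion property.

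The main obstacle is showing that $F'(\theta)$ does not change sign anywhere on $E(0,L)\cap\Db$, not just near $\theta=\pi$. This is exactly where the restriction $|x|<b$ enters: the inequality $2bc\cos\theta+c^2+a^2\sin^2\theta<0$ forces $\cos\theta$ to be negative enough that $a^2\cos\theta-bc$ stays safely negative, and a direct algebraic comparison of $F'(\theta)$ against the $\Db$-inequality should preclude any zero of $F'$. I expect this calculation, separating the contributions of $\cos\theta$ and $\sin^2\theta$, to be the longest and most delicate step of the proof.
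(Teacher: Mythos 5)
Your overall architecture is the same as the paper's: compute $\cC=(N^*(Z)\smo)'$ from the defining function, check both projections of $\cC$ avoid the zero section, reduce to a single value of $s$ by rotation invariance, put a one-dimensional coordinate on each ellipse, and show that the $\md s$-component of $\Pi_Y$ is strictly monotone in that coordinate on the arc $E(s,L)\cap\Db$, which gives injectivity (the arc is an interval) and, via the triangular Jacobian, the immersion property. Your formula for $F(\theta)$ is correct (the paper derives the equivalent expression $H(\rho,\phi)$ in confocal elliptic coordinates rather than your standard angle parameterization), your characterization of $E(0,L)\cap\Db$ by $2bc\cos\theta+c^2+a^2\sin^2\theta<0$ is right, and the ellipticity and order statements are handled correctly.

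The gap is that the one step you defer --- showing $F'$ has no zero anywhere on the arc --- is the actual mathematical content of the theorem, and it is not the routine algebraic comparison you anticipate. Writing out the numerator of $F'$ gives an expression such as $a^2c^2(2t^2-1)+a^4t^2-bct\,(c^2+2a^2-a^2t^2)$ with $t=\cos\theta$, whose first term is negative for $|t|<1/\sqrt2$, so positivity genuinely requires quantitative input from the condition $x\in\Db$. In the paper this occupies the entire proof of Claim \ref{lemma:deriv}: the sign question is reduced to positivity of a quantity $N$, and the authors need not only the bound $t^2\le b^2$ (Lemma \ref{Admissible pairs lemma 1}, your $\Db$-inequality) but also a second constraint bounding $\cosh^2\rho$, i.e.\ restricting which ellipses can meet $\Db$ at all (Lemma \ref{Admissible pairs lemma 2}); even then no single estimate covers all $\alpha\in(0,\pi/2)$, and the argument is closed only by the union of two overlapping cases, Proposition \ref{proposition 1} for $\alpha>0.8$ and Proposition \ref{proposition 2} for $\alpha<1.2$. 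So the sentence ``a direct algebraic comparison of $F'(\theta)$ against the $\Db$-inequality should preclude any zero of $F'$'' is precisely the claim that must be proved, and your proposal supplies neither the needed second constraint on $c$ (equivalently on $\rho$) nor the case analysis in $\alpha$. Until that estimate is carried out, the Bolker Assumption, and hence the theorem, is not established.
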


As a consequence of this result, we have the following corollary.

\begin{corollary}\label{cor:R*R}
  The composition of $\Rc$ with its $L^{2}$ adjoint $\Rc^{*}$ when
restricted as a transformation from $\cE'(\mathrm{D}_{b})$ to
$\cD'(\mathrm{D}_{b})$ is an elliptic pseudo-differential operator of order
$-1$.
\end{corollary}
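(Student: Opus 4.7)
The strategy is to derive this corollary directly from Theorem \ref{thm:bolker} via the Guillemin--Sternberg calculus recalled in Section \ref{sect:micro-RT}. Since Theorem \ref{thm:bolker} identifies $\Rc$ as an elliptic FIO of order $-1/2$ with canonical relation $\Cc$ satisfying the Bolker Assumption, the $L^{2}$-adjoint $\Rc^{*}$ is automatically an elliptic FIO of order $-1/2$ whose canonical relation is the transpose $\Cc^{t}$.

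The central step is to check that $\Cc^{t}\circ \Cc \subset \Delta_{T^{*}(\Db)\smo}$. Suppose $(x',\xi',x,\xi)\in \Cc^{t}\circ \Cc$; by the definition of composition there exists $(y,\eta)\in T^{*}(Y)$ with both $(y,\eta,x,\xi)$ and $(y,\eta,x',\xi')$ in $\Cc$. The injectivity of $\Pi_{Y}:\Cc \to T^{*}(Y)$ supplied by the Bolker Assumption forces $(x,\xi)=(x',\xi')$, so the composed relation lies in the diagonal. Thus $\Rc^{*}\Rc$ has wavefront relation inside $\Delta$ and is microlocally a pseudodifferential operator.

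To promote this microlocal statement to a genuine $\Psi$DO statement, one must verify that the composition of $\Rc^{*}$ and $\Rc$ is clean in H\"ormander's sense and that the associated projection is proper. Cleanness is a consequence of the Bolker Assumption: $\Pi_{Y}$ is an injective immersion, and since $\dim Y = \dim \Db$ the projection $\Pi_{X}$ is also an immersion (as already noted in the excerpt via \cite{Ho1971}), which gives the transverse intersection required by the composition calculus of FIOs. Properness comes from the properness of $\pi_{R}:Z\to \Db$ in the double fibration \eqref{doublefibration-R}, whose fibers are circles, together with the compact support restriction to $\Ec'(\Db)$. The orders then add under composition: $-1/2+(-1/2)=-1$, giving the claimed order.

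Ellipticity of $\Rc^{*}\Rc$ follows from that of $\Rc$: the principal symbol of $\Rc^{*}\Rc$ at $(x,\xi)\in T^{*}(\Db)\smo$ equals, up to smooth positive factors arising from the chosen densities, $|\sigma_{\Rc}|^{2}$ evaluated at the unique preimage $\Pi_{X}^{-1}(x,\xi)$, with uniqueness a further consequence of the Bolker Assumption, and this is nonzero by the ellipticity of $\Rc$. The real content of the corollary is therefore carried entirely by Theorem \ref{thm:bolker}; once the Bolker Assumption is in hand the remaining work is a bookkeeping application of the FIO calculus, and that is where I anticipate no substantive obstacle.
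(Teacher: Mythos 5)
Your overall strategy matches the paper's: use the FIO calculus to see that $\Rst$ is an elliptic FIO of order $-1/2$ associated to $\cC^t$, use properness of $\pi_R$ to justify the composition, use injectivity of $\Pi_Y$ (Bolker) and clean composition to get $\cC^t\circ\cC\subset\Delta$, and add the orders to get $-1$. All of that is correct. But there is a genuine gap at the ellipticity step. You write that the principal symbol of $\Rst\Rc$ at $(x,\xi)\in T^*(\Db)\smo$ is $|\sigma_{\Rc}|^2$ evaluated at ``the unique preimage $\Pi_X^{-1}(x,\xi)$,'' and that this is nonzero by ellipticity of $\Rc$. This presupposes that such a preimage \emph{exists}, i.e.\ that $\Pi_R:\cC\to T^*(\Db)\smo$ is \emph{surjective} --- and nothing you have invoked delivers that. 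The Bolker Assumption gives injectivity of $\Pi_Y$, and hence (by H\"ormander's result in equal dimensions) that $\Pi_X$ is an immersion, in fact a local diffeomorphism; but a local diffeomorphism can have image a proper open subset of $T^*(\Db)\smo$. On the complement of the image the symbol of $\Rst\Rc$ vanishes identically, so the operator would be a \psido\ but \emph{not} elliptic there (this is exactly what happens in limited-data problems, where some singularities are invisible). Ellipticity of $\Rc$ as an FIO only says its symbol is nonvanishing on $\cC$; it says nothing about covectors that $\cC$ never reaches. Likewise, injectivity of $\Pi_Y$ alone only gives $\cC^t\circ\cC\subset\Delta$, not equality. (Your parenthetical claim that the preimage is \emph{unique} is also not a consequence of Bolker --- Bolker constrains $\Pi_Y$, not $\Pi_X$ --- but this is harmless here, since multiple preimages contribute nonnegative terms $|\sigma_{\Rc}|^2$ that cannot cancel.)

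The paper closes this gap with an explicit geometric argument: given $(x,\xi)\in T^*(\Db)\smo$, any ellipse $E(s,L)$ through $x$ has $L=|x-\gr(s)|+|x-\gt(s)|$, and as $s$ runs over $\otp$ the normal line at $x$ to $E(s,|x-\gr(s)|+|x-\gt(s)|)$ rotates through a full $2\pi$, so some $\so$ produces an ellipse conormal to $(x,\xi)$; the explicit form of $\cC$ in \eqref{def:C} then exhibits $\lambda\in\cC$ with $\Pi_R(\lambda)=(x,\xi)$. (Alternatively, as the paper notes, surjectivity follows from $\pi_R$ being a proper submersion.) To complete your proof you must supply this surjectivity argument; the rest of your write-up is sound and coincides with the paper's route.
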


\begin{remark}\label{remark:R*R-sing} This corollary shows that, for
$\supp f\subset \Db$, the singularities of $\Rc^*\Rc f$ are at the
same locations and co-directions as the singularities of $f$, that is,
the wavefront sets are the same.  In other words, $\Rst \Rc$
reconstructs all the singularities of $f$.  If $D$ is an elliptic
differential operator on $Y$, then one can create an elliptic local
reconstruction operator $\Rc^* D\Rc$ that will emphasize
singularities.  REU research student Howard Levinson
\cite{Howiethesis} refined and implemented an algorithm of Prof.\
Quinto's of this form and the algorithm does show all the
singularities of $f$.  Because of the derivative, the algorithm
highlights boundaries, and this type of algorithm generalizes Lambda
Tomography \cite{FRS, FFRS}.\end{remark}

\section{Proofs of Theorem \ref{thm:bolker} and Corollary
\ref{cor:R*R}}\label{sect:proofs}

\begin{proof}[Proof of Theorem \ref{thm:bolker}]

First, we will calculate $\cC = (N^*(Z)\smo)'$ where $Z$ is given by
\eqref{def:Z}.  Then most of the proof is devoted to showing that
$\cC$ satisfies the Bolker Assumption.

  Since $Z$ is defined by the equation
\[L-|x-\gr(s)|-|x-\gt(s)|=0,\]     the differential of the function is
a basis for $N^*(Z)$.  That differential is
%\bel{dZ}
\[
\begin{aligned}
% &\left(\frac{(x-\gr(s))\cdot\gr'(s)}{|x-\gr(s)|}
% +\frac{(x-\gt(s))\cdot \gt'(s)}{|x-\gt(s)|}\right)ds
% \\&\quad -dL
% -\left(\frac{x-\gr(s)}{|x-\gr(s)|}
% +\frac{x-\gt(s)}{|x-\gt(s)|}\right)\dx\\
&= -\left(\frac{x\cdot\gr'(s)}{|x-\gr(s)|} +\frac{x\cdot
\gt'(s)}{|x-\gt(s)|}\right)\md s +\md L \\
&\qquad -\left(\frac{x-\gr(s)}{|x-\gr(s)|}
+\frac{x-\gt(s)}{|x-\gt(s)|}\right)\md x.\end{aligned}
\]
Note that we
are using the fact that $\gr(s)$ and $\gr'(s)$ are orthogonal (as are
$\gt(s)$ and $\gt'(s)$). Therefore, $\cC=\left(N^*(Z)\smo\right)'$ is
\bel{def:C}\begin{aligned} \cC
=\Bigg{\{}\Bigg(s,L,\omega\left(-\left(\frac{x\cdot\gr'(s)}{|x-\gr(s)|} +\frac{x\cdot
\gt'(s)}{|x-\gt(s)|}\right)\md s +\md L\right),\\
\qquad\qquad x,
\omega\left(\frac{x-\gr(s)}{|x-\gr(s)|}
+\frac{x-\gt(s)}{|x-\gt(s)|}\right)\md x\Bigg)\\
\qquad\qquad\qquad\qquad
\st (s,L,x)\in Z,\ \om\in \rr\setminus \{0\}\Bigg{\}}.\end{aligned}\ee
The Schwartz kernel of $\Rc$ is integration
on $Z$ (e.g., \cite[Proposition 1.1]{Q1980})
and so is a Fourier integral distribution associated to $\cC$
\cite{Gu1985}.

Now we show that the projection $\Pi_L$ satisfies the Bolker
Assumption.  If $\lambda \in \cC$ the projection is given by
\eqref{def:C} %\bel{PiL:genl}
\[
\Pi_L(\lambda ) =
\left(s,L,\omega\left(-\left(\frac{x\cdot\gr'(s)}{|x-\gr(s)|}
+\frac{x\cdot \gt'(s)}{|x-\gt(s)|}\right)\md s +\md L\right)\right).
\]
From
this, we have determined, $s$, $L$, and $\om$ and we need to find $x$
knowing that $x\in E(s,L)$.

The easiest way to do this is to develop coordinates, first on the
ellipses, then on $Z$, and finally on $\cC$.

We will reduce to the case $s=\pi/2$, so first we give
 coordinates on the ellipse $E(\pi/2,L)$ (see Figure \ref{ellpt-coord-pic}), which has foci $(\pm a,b)$:
for $\phi\in \otp$ and $L>2a$ we define
\bel{x'-coords}
\begin{gathered} x'=x'(L,\phi)  =
(a\cosh\rho\cos \phi,b+a\sinh\rho\sin\phi)\\
\text{where} \ \ \rho = \arccos(2L/a).\end{gathered}
\ee

\begin{figure*}
\centering
\includegraphics[height=3in]{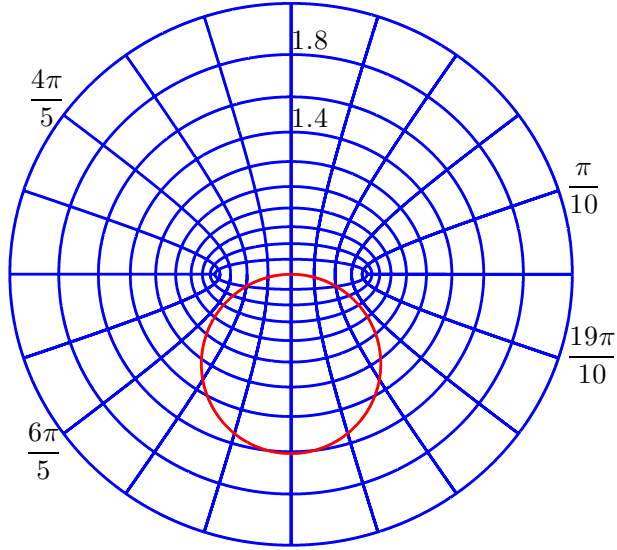}
%\vspace{-5mm}
\caption{Elliptic Coordinate System. The radius of the circle in the figure is $b$.} \label{ellpt-coord-pic}
\end{figure*}
Note that the ellipse $E(s,L)$ meets the ball $\Db$ if and only if
$L\in (2a,2\sqrt{a^2+4b^2})$, and for each $L\in (2a,2\sqrt{a^2+4b^2})$,
there is an interval $(\phi_{\min}(L),\phi_{\max}(L))$ with
$\pi<\phi_{\min}(L)<\phi_{\max}(L)<2\pi$ such that $x'(L,\phi)\in
\Db$ if and only if $\phi\in (\phi_{\min}(L),\phi_{\max}(L))$.

Next we let $\T{\th}$ be the counterclockwise rotation about the
origin through $\th$ radians.  For $(s,L)\in Y$, this gives
coordinates on the ellipse $E(s,L)$ %\bel{x-coords}
\[
x(s,L,\phi) =
\T{s-\pi/2}\left(x'(L,\phi)\right)
\]
since $\T{s-\pi/2}$ rotates
$\gr(\pi/2)$ to $\gr(s)$ and rotates $\gt(\pi/2)$ to $\gt(s)$.  This
rotation preserves distances and so it does not change the major
diameter of the ellipse, $L$.  Furthermore, for $\phi\in
(\phi_{\min}(L),\phi_{\max}(L))$, $x(s,L,\phi)\in \Db$.

We get coordinates on $Z$ as follows.  Let %\bel{def:tZ}
\[
\begin{aligned}\tZ=
\{(s,L,\phi)\st s\in \otp,\ L\in (2a,2\sqrt{a^2+4b^2}),\\
\phi\in (\phi_{\min}(L),\phi_{\max}(L))\}.\end{aligned}
\]
This provides
local coordinates above $\Db$ on $Z$: \[ \tZ\ni(s,L,\phi)\mapsto
z(s,L,\phi) = \left(s,L,x(s,L,\phi)\right).\] Finally, this gives
coordinates on $\cC$: %\bel{coords:C}
\[
\begin{aligned}
\lambda(s,L,\phi,\om) &=
(s,L,\omega\left(-\left(\frac{x\cdot\gr'(s)}{|x-\gr(s)|} +\frac{x\cdot
\gt'(s)}{|x-\gt(s)|}\right)\md s +\md L\right),\\
&\qquad\qquad x,
\omega\left(\frac{x-\gr(s)}{|x-\gr(s)|}
+\frac{x-\gt(s)}{|x-\gt(s)|}\right)\md x\big)\\
&\text{where} \ \ (s,L,\phi,\om)\in \tZ\times (\rr\setminus \{0\}),\\
&\text{and}\ x=x(s,L,\phi)\end{aligned}
\] and this gives $\Pi_L$ in
coordinates as %\bel{coords:PiL}
\[
\begin{aligned}\Pi_L(\lambda(s,L,\phi,\om))& =
\Big(s,L,-\omega\left(\frac{x\cdot\gr'(s)}{|x-\gr(s)|} +\frac{x\cdot
\gt'(s)}{|x-\gt(s)|}\right)\md s\\ &\qquad+\om \md L\Big).  \end{aligned}
\]

Using a series of estimates, we will show the following
claim.

\begin{claim}\label{lemma:deriv}
For $(s,L,\phi)\in \tZ$, the derivative of
\bel{genl:ds}\frac{x(s,L,\phi)\cdot\gr'(s)}{|x(s,L,\phi)-\gr(s)|}
+\frac{x(s,L,\phi)\cdot \gt'(s)}{|x(s,L,\phi)-\gt(s)|}\ee with
respect to $\phi$ is never zero.\end{claim}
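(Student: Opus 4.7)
My plan is to (i) exploit the rotational symmetry to reduce to $s=\pi/2$, (ii) express \eqref{genl:ds} in a clean closed form using the elliptic coordinate $\phi$, and (iii) sign the resulting derivative with the help of the support restriction $x\in\Db$.

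For (i), the rotation $T_{s-\pi/2}$ is orthogonal and sends $\gr(\pi/2),\gt(\pi/2)$ and their velocities to $\gr(s),\gt(s)$ and theirs; since it preserves both the inner product and the norm, each summand of \eqref{genl:ds} is invariant under this rotation, so it suffices to prove the claim at $s=\pi/2$ with $x=x'(L,\phi)$. For (ii) I would use $\gr(s)\cdot\gr'(s)=0$ (and the analog for $\gt$) to rewrite $x\cdot\gr'(s)=(x-\gr(s))\cdot\gr'(s)$, turning each summand into a unit vector dotted with $\gr'$ or $\gt'$. Combined with the identities
$|x'(L,\phi)-\gr(\pi/2)|=a(\cosh\rho+\cos\phi)$ and $|x'(L,\phi)-\gt(\pi/2)|=a(\cosh\rho-\cos\phi)$ (a direct expansion of $|x'-\gr|^2$ using $\cosh^2\rho-\sinh^2\rho=1$), a short algebraic simplification should collapse \eqref{genl:ds} to the compact form
\[
F(\phi)\;=\;\frac{2\sinh\rho\,\cos\phi\,(a\sin\phi-b\sinh\rho)}{\cosh^2\rho-\cos^2\phi}.
\]

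For (iii), differentiation yields $F'(\phi)=\dfrac{2\sinh\rho\,A(\phi)}{(\cosh^2\rho-\cos^2\phi)^2}$, where
\[
A(\phi)\;=\;b\sinh\rho\sin\phi\,(\cosh^2\rho+\cos^2\phi)+a\bigl(\sinh^2\rho\cos^2\phi-\cosh^2\rho\sin^2\phi\bigr).
\]
The support restriction $|x'(L,\phi)|<b$ expands (via $x'=(a\cosh\rho\cos\phi,\,b+a\sinh\rho\sin\phi)$ and $\cosh^2\rho=1+\sinh^2\rho$) to the strict inequality
\[
a(\cos^2\phi+\sinh^2\rho)+2b\sinh\rho\sin\phi<0,
\]
so $b\sinh\rho\sin\phi<-\tfrac{a}{2}(\cos^2\phi+\sinh^2\rho)$. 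Multiplying this bound by $\cosh^2\rho+\cos^2\phi>0$ and substituting into $A(\phi)$, then simplifying with $\cosh^2\rho-\sinh^2\rho=1$, should reduce the estimate to $A(\phi)<-\tfrac{a}{2}\bigl(\cos^2\phi+\cos^4\phi+\cosh^2\rho\sinh^2\rho+2\cosh^2\rho\sin^2\phi\bigr)<0$, giving $F'(\phi)<0$ strictly on $(\phi_{\min}(L),\phi_{\max}(L))$.

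The main obstacle is the last algebraic step: the positive contribution $a\sinh^2\rho\cos^2\phi$ in $A$, together with the positive cross term produced when the constraint is multiplied by $\cosh^2\rho+\cos^2\phi$, must be dominated by the remaining negative terms after the identity $\cosh^2\rho-\sinh^2\rho=1$ is applied. That this dominance is exactly tight enough to require the support restriction to $\Db$ is precisely why the hypothesis $\supp f\subset\Db$ appears in Theorem~\ref{thm:bolker}; on larger discs $F'$ can vanish and the Bolker Assumption may fail.
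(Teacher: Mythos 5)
Your proposal is correct, and I checked the algebra: the reduction to $s=\pi/2$, the closed form $F(\phi)$, the numerator $A(\phi)=b\sinh\rho\sin\phi\,(\cosh^{2}\rho+\cos^{2}\phi)+a(\sinh^{2}\rho\cos^{2}\phi-\cosh^{2}\rho\sin^{2}\phi)$, the expansion of $|x'(L,\phi)|^{2}<b^{2}$ into $a(\cos^{2}\phi+\sinh^{2}\rho)+2b\sinh\rho\sin\phi<0$, and the final substitution all work out; after the identity $\cosh^{2}\rho-\sinh^{2}\rho=1$ one gets exactly $A(\phi)<-\tfrac{a}{2}\left(\cos^{2}\phi+\cos^{4}\phi+\sinh^{2}\rho\cosh^{2}\rho+2\cosh^{2}\rho\sin^{2}\phi\right)<0$ since $\rho>0$. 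However, your route diverges from the paper's after the common closed form. The paper substitutes $t=\cos\phi$, studies $\tilde H(t)=(b\sinh\rho\, t+at\sqrt{1-t^{2}})/(\cosh^{2}\rho-t^{2})$, reduces positivity of the derivative's numerator to the inequality $t^{2}\le(1+k^{2})/(1+2k^{2})$ with $k=\sinh\rho$, and then invokes only two \emph{necessary} consequences of admissibility (Lemmas \ref{Admissible pairs lemma 1} and \ref{Admissible pairs lemma 2}: $|t|\le b$ and $\cosh^{2}\rho\le(4-3a^{2})/a^{2}$); because neither consequence alone suffices for all geometries, the paper needs two overlapping Propositions ($\alpha>0.8$ and $\alpha<1.2$) to cover $\alpha\in(0,\pi/2)$. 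You instead differentiate directly in $\phi$ and feed in the \emph{exact} admissibility condition $x'(L,\phi)\in\Db$ as a single linear bound on $b\sinh\rho\sin\phi$, which kills the positive terms in one stroke and yields a unified estimate with no case split and no square roots. Your argument is cleaner and arguably sharper (it uses the full strength of the support hypothesis rather than two projections of it); what the paper's version buys is the explicit geometric description of which coordinate hyperbolas and ellipses meet $\Db$, which is of some independent interest. One cosmetic caveat: several steps are phrased as ``should collapse'' / ``should reduce,'' so to be a finished proof you would need to write out the quotient-rule computation and the final expansion, but all the formulas you assert are correct.
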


This claim shows that $\Pi_L$ is injective for the following reasons.
Since $s,L$ and $\om\neq 0$ are given from the projection $\Pi_L$, to
show $\Pi_L$ is injective, all we need to show is that for fixed
$(s,L)$ with $s\in \otp$ and $L\in (2a,2\sqrt{a^2+4b^2}),$
\eqref{genl:ds} determines $\phi\in \pmpM$ uniquely.  However, Claim
\ref{lemma:deriv} shows for fixed $(s,L)$ and $\phi$ in the interval
$\pmpM$ that \eqref{genl:ds} is either strictly increasing or strictly
decreasing.  Since $\pmpM$ is an interval, this will show that
\eqref{genl:ds} is an injective function of $\phi$ and therefore
$\Pi_L$ is injective.

Next, we will use Claim \ref{lemma:deriv} to argue that $\Pi_L$ is an
immersion.  Since $s,L$ and $\om\neq 0$ are given directly from the projection
$\Pi_L$, writing out the derivative matrix of $\Pi_L$ shows that one
only needs to prove for $(s,L,\phi)\in \tZ$ and $\om\neq 0$ that the
derivative of \eqref{genl:ds} with respect to $\phi$ is nowhere zero.
This again follows from the claim.

\begin{proof}[Proof of Claim \ref{lemma:deriv}]
We  prove this by making a reduction to $s=\pi/2$ and then  by a
series of estimates.

Using rotation invariance: we apply $\T{\pi/2-s}$ on $\rtwo$, and use
the facts that distances and dot products are preserved
(so e.g.,
$x\cdot\gr'(s) = x'\cdot\gr'(\pi/2)$ and
$|x-\gr(s)|=|x'-\gr(\pi/2)|$) where
\[\begin{gathered}
\T{\pi/2-s}(x(s,L,\phi)) = x'(L,\phi),\\
\T{\pi/2-s}(\gr(s))=\gr(\pi/2)=(-a,b),\\
\T{\pi/2-s}(\gr'(s))=\gr'(\pi/2)=(-b,-a))\\
\T{\pi/2-s}(\gt(s))=\gt(\pi/2)=(a,b),\\
\T{\pi/2-s}(\gt'(s))=\gt'(\pi/2)=(-b,a)\end{gathered}\] This gives the
simplified expression that is equal to \eqref{genl:ds}:
%\bel{pi/2:ds}
\[
S(L,\phi)=\left(\frac{x'(L,\phi)\cdot(-b,-a)}{|x'(L,\phi)-(-a,b)|}
+\frac{x'(L,\phi)\cdot (-b,a)}{|x'(L,\phi)-(a,b)|}\right).
\]

We have reduced showing $\Pi_L$ is an injective immersion to showing
%\bel{deriv-goal}
\[
\begin{gathered}\frac{\partial}{\partial
\phi}S(L,\phi)\neq 0 \\ \text{ for }L\in
(2a,2\sqrt{a^2+4b^2}),\ \phi\in  \pmpM.\end{gathered}
\]

%*****************************************************
Using (\ref{x'-coords}) we obtain
\[\begin{gathered}|x'(L,\phi)-(-a,b)|=\sqrt{a^2(\cosh\rho\cos\phi+1)^2+a^2\sinh^2\rho\sin^2\phi}\\
=a(\cosh\rho+\cos\phi),\end{gathered}\]
\[\begin{gathered}|x'(L,\phi)-(a,b)|=\sqrt{a^2(\cosh\rho\cos\phi-1)^2+a^2\sinh^2\rho\sin^2\phi}\\
=a(\cosh\rho-\cos\phi)\end{gathered}\]
and
\begin{align*}
S(L,\phi)&=\frac{-b\cosh\rho\cos\phi-b-a\sinh\rho\sin\phi}{\cosh\rho+\cos\phi}\\
&+\frac{-b\cosh\rho\cos\phi+b+a\sinh\rho\sin\phi}{\cosh\rho-\cos\phi}.
\end{align*}

Adding these two fractions, and simplifying the expression yields the following:
\begin{equation*}
S(L,\phi)=H(\rho,\phi)=\displaystyle{\frac{2\shr\,\cos\phi\;(a\sin\phi-b\shr)}{(\chr+\cos\phi)(\chr-\cos\phi)}}
\end{equation*}
Our problem therefore is reduced to showing that $\frac{\partial}{\partial\phi}H(\rho,\phi)\neq 0$.

Denote $t=\cos\phi$, then $\sin\phi=-\sqrt{1-t^2}$, since
$\phi\in[\pi,2\pi]$.

\noindent Consider a new function $\tilde{H}(t)$ defined as follows (the term $-2\shr$ does not matter):

\begin{equation*}
\tilde{H}(t)=\displaystyle{\frac{b\,\shr t+at\sqrt{1-t^2}}{\chrs-t^2}}.
\end{equation*}

\begin{align*}
\tilde{H}'(t)&=\frac{1} {\left(\chrs-t^2\right)^2}
\Big[2t\left(bt\shr+at\sqrt{1-t^2}\right)\\
&\quad+\left(b\,\shr
+a\sqrt{1-t^2}-\displaystyle\frac{at^2}{\sqrt{1-t^2}}\right)
  \left(\chrs-t^2\right)\Big].\end{align*}
Denoting the numerator of the above expression by $N$, and
distributing the product we get

\bel{numerator}\begin{aligned}
N&=\chrs\left(b\,\shr+a\sqrt{1-t^2} -
\displaystyle\frac{at^2}{\sqrt{1-t^2}}\right)\\
&\qquad +t^2\left(b\shr+a\sqrt{1-t^2}+\displaystyle\frac{at^2}{\sqrt{1-t^2}}\right)\\
 &=\left(\chrs+t^2\right)\left(b\shr+a\sqrt{1-t^2}\right)-\left(\chrs-t^2\right)\frac{at^2}{\sqrt{1-t^2}}.
\end{aligned}\ee
Denoting \[k=\shr, \ \ \text{we have} \ \ \chrs=1+k^2,\] and
$$\begin{aligned}
N&=\sqrt{1-a^2}\,k\,(1+k^2)+a(1+k^2)\sqrt{1-t^2}+\sqrt{1-a^2}\;k\,t^2\\
&\qquad+ at^2\sqrt{1-t^2}+\displaystyle\frac{at^4}{\sqrt{1-t^2}}
-\displaystyle\frac{a(1+k^2)t^2}{\sqrt{1-t^2}}
\end{aligned}$$
\noindent Notice that all terms in the above expression are non-negative except the last one. Hence, in order to show that $N>0$
it is enough to show that $N_1>0$, where
$$
N_1=a(1+k^2)\sqrt{1-t^2}+at^2\sqrt{1-t^2}+\displaystyle\frac{at^4}{\sqrt{1-t^2}}-\displaystyle\frac{a(1+k^2)t^2}{\sqrt{1-t^2}}.
$$
Since multiplication by $\sqrt{1-t^2}$ does not change the algebraic sign, $N>0$ if and only if $\sqrt{1-t^2}\;N_1>0$, which is
equivalent to
\begin{equation}\label{star7}
t^2\le \displaystyle\frac{1+k^2}{1+2k^2}.
\end{equation}

\begin{definition}
We will call the pair $(\rho,\phi)$ \textit{admissible} if the point defined by elliptic coordinates $(\rho,\phi)$ is located inside $D(0,b)$, the disc of radius $b$ centered at the origin. % denoted by $D(0,b)$.
\end{definition}

We need to show that $N>0$ for all admissible pairs $(\rho,\phi)$.
\begin{comment}
\begin{figure*}
\centering
\includegraphics[height=3in]{ellipticcoordinates_thin.eps}
\vspace{-5mm}
\caption{Elliptic Coordinate System.} \label{ellpt-coord-pic}
\end{figure*}

%%%%%%%%%%%%%%%%%%%%%%%%%%%%%%%%%%%%%%%%%%%           Lemma 1       %%%%%%%%%%%%%%%%%%%%%%%%%%%%%%%%%%%%%%
\end{comment}
\begin{lemma}\label{Admissible pairs lemma 1}
If $|\cos\phi|>b$ then $(\rho,\phi)$ is not admissible for any $\rho>0$.
\end{lemma}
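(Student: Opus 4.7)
My plan is to unpack the admissibility condition $|x'(L,\phi)|<b$ directly in terms of $\rho$ and $\phi$, reduce it to a quadratic inequality in the variable $k=\sinh\rho$, and then observe that for this quadratic to admit a real solution $k>0$ one must have $|\cos\phi|<b$.

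First I would use the explicit parametrization \eqref{x'-coords},
\[
x'(L,\phi)=(a\cosh\rho\cos\phi,\ b+a\sinh\rho\sin\phi),
\]
to compute
\[
|x'(L,\phi)|^{2}=a^{2}\cosh^{2}\rho\cos^{2}\phi+b^{2}+2ab\sinh\rho\sin\phi+a^{2}\sinh^{2}\rho\sin^{2}\phi .
\]
Admissibility asks that this quantity be less than $b^{2}$, so after cancelling $b^{2}$ and using $\cosh^{2}\rho=1+\sinh^{2}\rho$ together with $\cos^{2}\phi+\sin^{2}\phi=1$, the inequality collapses to
\[
a^{2}\sinh^{2}\rho+2ab\sinh\rho\,\sin\phi+a^{2}\cos^{2}\phi<0 .
\]

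Next I would set $k=\sinh\rho$ and read this as a quadratic inequality $Q(k)<0$ with $Q(k)=a^{2}k^{2}+2ab\sin\phi\,k+a^{2}\cos^{2}\phi$. Since the leading coefficient $a^{2}>0$, $Q(k)<0$ can hold for some real $k$ only if $Q$ has two distinct real roots, i.e., only if the discriminant is strictly positive:
\[
(2ab\sin\phi)^{2}-4a^{2}\cdot a^{2}\cos^{2}\phi>0
\quad\Longleftrightarrow\quad
b^{2}\sin^{2}\phi>a^{2}\cos^{2}\phi .
\]
Using $a^{2}+b^{2}=1$ (from \eqref{def:ab}), the right-hand inequality rewrites as $b^{2}>(a^{2}+b^{2})\cos^{2}\phi=\cos^{2}\phi$, i.e., $|\cos\phi|<b$.

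Therefore, contrapositively, if $|\cos\phi|>b$ then the discriminant of $Q$ is non-positive, and being the sum of a non-negative discriminant correction and the positive leading term it has $Q(k)\ge 0$ for every real $k$. In particular $Q(\sinh\rho)\ge 0$ for every $\rho>0$, so the admissibility inequality fails for all $\rho>0$, proving the lemma. I do not anticipate a serious obstacle: the only subtle point is keeping track of the identity $a^{2}+b^{2}=1$ and $\cosh^{2}\rho-\sinh^{2}\rho=1$ when simplifying, both of which are immediate from the definitions in \eqref{def:ab}.
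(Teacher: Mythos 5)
Your proof is correct and is essentially the paper's argument carried out in different variables: the paper intersects the constant-$\phi$ coordinate hyperbola with the circle $x_1^2+x_2^2=b^2$ and obtains a quadratic in $(x_2-b)$ with no real roots when $|\tan\phi|<a/b$, while you obtain the same quadratic (under the substitution $x_2-b=a\sinh\rho\,\sin\phi$) in $k=\sinh\rho$, and both reduce to the identical discriminant condition $b^2\sin^2\phi>a^2\cos^2\phi$, i.e.\ $|\cos\phi|<b$. A minor advantage of your formulation is that it requires no special treatment of the angles $\phi=k\pi/2$, which the paper's hyperbola equation has to exclude.
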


\begin{proof}
Recall the coordinate system $(\rho,\phi)$ (see Fig. \ref{ellpt-coord-pic}) for $x=(x_{1},x_{2})$ defined by
\begin{equation}\label{ellpt-coord-def}
\left\{
\begin{array}{l}
x_{1}=a\chr \cos\phi\\
x_{2}=b+a\shr\sin\phi
\end{array}
\right.
\end{equation}
where $a>0$ is fixed, $\rho>0$, and $\phi\in[0,2\pi]$. It is easy to notice that the coordinate curves (hyperbolas) corresponding to fixed values of $\phi$   intersect the disc $D(0,b)$ only for limited values of $\phi$. To find the range of these values consider the following system of equations:

\begin{equation*}
\left\{
\begin{array}{rcl}
\left(\displaystyle\frac{x_{1}}{a\cos\phi}\right)^2-\left(\displaystyle\frac{x_{2}-b}{a\sin\phi}\right)^2&=&1,\;\;
\phi\ne \displaystyle\frac{k\pi}{2}, \;\; k=0,\ldots,4\\
\\
x_{1}^{2}+x_{2}^2&=&b^2.
\end{array}
\right.
\end{equation*}
Simplifying the system one gets a quadratic equation with respect to $(x_{2}-b)$
$$
(x_{2}-b)^2+2b\sin^2\phi(x_{2}-b) +a^2\,\sin^2\phi\;\cos^2\phi=0,
$$
which does not have any real roots if $|\tan\phi|<\displaystyle\frac{a}{b}$. Recalling that $b=\sqrt{1-a^2}$ we conclude that
the hyperbolas corresponding to coordinate curves with constant $\phi$ intersect the domain of the function support $D(0,b)$
only when $|t|\le b= \sqrt{1-a^2}$.
\end{proof}

\begin{comment}
%%%%%%%%%%%%%%%%%%%%%%%%%%%%%%%%%%%%%%%%%%%           Lemma 2       %%%%%%%%%%%%%%%%%%%%%%%%%%%%%%%%%%%%%%
\end{comment}
\begin{lemma}\label{Admissible pairs lemma 2}
If $\chrs>\displaystyle\frac{4-3a^2}{a^2}$ then $(\rho,\phi)$ is not admissible for any $\phi$.
\end{lemma}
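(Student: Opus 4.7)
The plan is to reduce admissibility to a one-variable inequality and optimize out $\phi$. Substituting the elliptic coordinates from \eqref{ellpt-coord-def} into $x_1^2+x_2^2<b^2$ and using the identity $\cosh^2\rho\cos^2\phi+\sinh^2\rho\sin^2\phi=\cosh^2\rho-\sin^2\phi$, admissibility of $(\rho,\phi)$ becomes
\[
a^2(\cosh^2\rho-\sin^2\phi)+2ab\,\sinh\rho\,\sin\phi<0.
\]
Thus the lemma is equivalent to showing that, under the hypothesis $\cosh^2\rho>(4-3a^2)/a^2$, this inequality fails for every $\phi$.

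Next I would treat the left-hand side as a function $g(u):=-a^2u^2+2ab\,\sinh\rho\,u+a^2\cosh^2\rho$ of $u=\sin\phi\in[-1,1]$. Since $g$ is a downward-opening parabola, its minimum over $[-1,1]$ is attained at an endpoint; a direct comparison shows $g(-1)\le g(1)$ (because $\sinh\rho>0$). Thus non-admissibility for every $\phi$ is equivalent to $g(-1)\ge 0$, i.e.
\[
a\,\sinh\rho\,(a\,\sinh\rho-2b)\ge 0,
\]
which, since $a\sinh\rho\ge 0$, is just $a\sinh\rho\ge 2b$.

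Finally I would square this, replace $\sinh^2\rho$ by $\cosh^2\rho-1$ and use $b^2=1-a^2$ to rewrite it as $\cosh^2\rho\ge (4-3a^2)/a^2$. The hypothesis of the lemma is strictly stronger than this bound, so $(\rho,\phi)$ cannot be admissible for any $\phi$.

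The main ``obstacle'' is really just the algebraic bookkeeping that collapses $|x|^2-b^2$ into the clean form above and identifies the correct endpoint $u=-1$ as the minimizer; once that is done, the lemma follows from a trivial squaring argument. There is no serious analytic content beyond the observation that the problematic $\phi$ is forced to be near $3\pi/2$ (closest approach of the ellipse to the origin), which is precisely where $g$ is minimized.
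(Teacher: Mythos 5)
Your proof is correct and reaches the lemma by the same essential route as the paper: both arguments come down to the threshold $a\sinh\rho=2b$, i.e.\ the lowest point $(0,\,b-a\sinh\rho)$ of the coordinate ellipse touching the bottom of the disc, which after using $\cosh^2\rho=1+\sinh^2\rho$ and $b^2=1-a^2$ gives exactly $\cosh^2\rho=(4-3a^2)/a^2$. The only difference is that the paper simply asserts geometrically that the largest $\rho$ for which the coordinate ellipse meets the disc satisfies $2b=a\sinh\rho$, whereas you justify this by writing $|x|^2-b^2$ as the concave quadratic $g(u)$ in $u=\sin\phi$ and checking that its minimum over $[-1,1]$ occurs at $u=-1$; that is a welcome piece of added rigor, not a different method.
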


\begin{proof} From equations (\ref{ellpt-coord-def}) it is easy to notice that the coordinate curves (ellipses) corresponding to fixed  values of $\rho$ do not intersect the disc $D(0,-b,b)$ if $\rho$ is large. Namely, the largest value of $\rho$ for which the corresponding ellipse intersects the disc satisfies the equation $2b=a\shr$. Since $\chrs=1+\shrs$, and $b^2=1-a^2$ we get that
$$
\chrs\le\displaystyle\frac{4-3a^2}{a^2}
$$
for each coordinate ellipse that intersects the disc.
\end{proof}

%%%%%%%%%%%%%%%%%%%%%%%%%%%%%%%%%%%%%%%%%%%       Proposition 1       %%%%%%%%%%%%%%%%%%%%%%%%%%%%%%%%%%%%%%

\begin{proposition}\label{proposition 1}
If $\alpha>0.8$, then for all admissible pairs $(\rho,\phi)$ we have $N>0$.
\end{proposition}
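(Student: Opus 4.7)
The plan is to combine the reduction already carried out in the text with the admissibility bound of Lemma \ref{Admissible pairs lemma 1}; the hypothesis $\alpha>0.8$ will then make the remaining inequality trivial, without invoking Lemma \ref{Admissible pairs lemma 2} at all.

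First, observe that for any admissible pair $(\rho,\phi)$ one has $|t|=|\cos\phi|\le b<1$, so $\sqrt{1-t^{2}}>0$. The preceding argument culminating in \eqref{star7} therefore yields the following chain of sufficient conditions for $N>0$: it suffices that $N_{1}>0$, which (by multiplying by $\sqrt{1-t^{2}}>0$) is in turn equivalent to the strict form of \eqref{star7}, namely
\[
t^{2} < \frac{1+k^{2}}{1+2k^{2}}, \qquad k := \sinh\rho.
\]
Since Lemma \ref{Admissible pairs lemma 1} gives $t^{2}\le b^{2}$ for every admissible pair, the proposition reduces to the single inequality
\[
b^{2} < \frac{1+k^{2}}{1+2k^{2}} \qquad \text{for every } k\ge 0.
\]

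Clearing denominators, this is equivalent to $(2b^{2}-1)\,k^{2} < 1-b^{2} = a^{2}$. The hypothesis enters here: since $0.8 > \pi/4 \approx 0.7854$, we have $b=\cos\alpha < \cos(\pi/4)=1/\sqrt{2}$, hence $2b^{2}-1<0$. The left-hand side is therefore $\le 0$ for every $k\ge 0$, while the right-hand side $a^{2}$ is strictly positive, so the inequality holds automatically. Combining with Lemma \ref{Admissible pairs lemma 1} yields the strict version of \eqref{star7} and hence $N>0$ for every admissible pair $(\rho,\phi)$.

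The essential conceptual point — and really the only obstacle to worry about — is recognizing that the coefficient $2b^{2}-1$ changes sign exactly at $\alpha=\pi/4$, so in the regime $\alpha>\pi/4$ the dependence on $k$ disappears entirely and no upper bound on $k$ is needed. For smaller $\alpha$, where $2b^{2}-1\ge 0$, the task of controlling $k$ genuinely requires the bound of Lemma \ref{Admissible pairs lemma 2}, and a more delicate analysis would be needed; but this is outside the scope of the present proposition, which is explicitly formulated for $\alpha>0.8$.
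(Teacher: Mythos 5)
Your proof is correct, and it takes a genuinely different route from the paper's. The paper invokes Lemma \ref{Admissible pairs lemma 2} to bound $k=\sinh\rho$ by $2\sqrt{1-a^2}/a$ for admissible pairs, uses the monotonicity of $f(k)=\frac{1+k^2}{1+2k^2}$ to conclude that $f(k)\geq \frac{4-3a^2}{8-7a^2}$ on the admissible range, and then verifies that $\alpha>0.8$ forces $1-a^2<\frac{4-3a^2}{8-7a^2}$, which together with Lemma \ref{Admissible pairs lemma 1} gives \eqref{star7}. You instead discard Lemma \ref{Admissible pairs lemma 2} entirely and observe that the required inequality $b^2(1+2k^2)<1+k^2$, i.e.\ $(2b^2-1)k^2<a^2$, holds for \emph{every} $k\geq 0$ once $2b^2-1<0$, which is exactly the condition $\alpha>\pi/4$; you also correctly note that the equivalence with positivity of $N_1$ requires the strict form of \eqref{star7} (the paper writes $\leq$ there, a harmless slip since $N$ exceeds $N_1$ by a nonnegative term). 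The trade-off is sharpness versus economy: by exploiting the upper bound on $k$, the paper's argument actually works down to roughly $\alpha>0.74$ (where $7a^4-2a^2\cdot{}$\ldots, precisely $a^2>\frac{6-2\sqrt2}{7}$), whereas yours works only for $\alpha>\pi/4\approx 0.785$; but both thresholds lie below $0.8$, so both prove the proposition as stated, and since Proposition \ref{proposition 2} covers $\alpha<1.2$ the overlap needed to treat all $\alpha\in(0,\pi/2)$ is preserved either way. Your closing remark that the sign change of $2b^2-1$ at $\alpha=\pi/4$ is the real dividing line, and that only below it does one genuinely need to control $k$, is an accurate and clarifying observation.
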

\begin{proof}
Recall from Lemma \ref{Admissible pairs lemma 2} that for admissible pairs $k^2+1\le \displaystyle\frac{4-3a^2}{a^2}$, hence
$0<k\le\displaystyle\frac{2\sqrt{1-a^2}}{a}$. Consider the right hand side of equation \eqref{star7}
$$
f(k)=\frac{1+k^2}{1+2k^2}.
$$
The function $f(k)$ is monotonically decreasing and for admissible pairs reaches its minimum
$$f(k_{\mathrm{min}})=\displaystyle\frac{4-3a^2}{8-7a^2},\;\;
k_{\mathrm{min}}=\displaystyle\frac{2\sqrt{1-a^2}}{a}.$$
Hence, if we show that under the conditions of the theorem
\begin{equation}\label{E1}
t^2<\displaystyle\frac{4-3a^2}{8-7a^2}
\end{equation}
then by equation (\ref{star7}) and the preceding argument $N>0$. At the same time by Lemma \ref{Admissible pairs lemma 1}
we know that $t^2<1-a^2$. So if $a>0$ satisfies
\begin{equation}\label{E2}
1-a^2<\displaystyle\frac{4-3a^2}{8-7a^2},
\end{equation}
then \eqref{E1} is satisfied. To finish the proof, notice that the hypothesis of the proposition $\alpha>0.8$ implies \eqref{E2}.
\end{proof}

\begin{comment}
%%%%%%%%%%%%%%%%%%%%%%%%%%%%%%%%%%%%%%%%%%%       Proposition 2       %%%%%%%%%%%%%%%%%%%%%%%%%%%%%%%%%%%%%%
%If $b\geq \frac{1}{2\sqrt{2}}$, then $N>0$.
\end{comment}

\begin{proposition}\label{proposition 2}
If $\alpha<1.2$, then for all admissible pairs $(\rho,\phi)$ we have $N>0$.

\end{proposition}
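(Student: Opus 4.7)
The plan is to establish $N>0$ for all admissible pairs $(\rho,\phi)$ by invoking the admissibility constraint in its full quadratic form, rather than via Lemmas \ref{Admissible pairs lemma 1} and \ref{Admissible pairs lemma 2} separately. Writing $t=\cos\phi$ and $k=\sinh\rho$, admissibility (the point \eqref{ellpt-coord-def} lying strictly in $D(0,b)$) is equivalent, after expanding $|x|^2<b^2$ and using $\cos^2\phi+\sin^2\phi=1$, to the strict inequality $a^2(t^2+k^2)^2<4b^2k^2(1-t^2)$.

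First, I would multiply $N$ through by $\sqrt{1-t^2}$ and reorganize the expression from \eqref{numerator} into
\[
\sqrt{1-t^2}\,N \;=\; bk(1+k^2+t^2)\sqrt{1-t^2} \;+\; a\bigl[(1+k^2)(1-2t^2)+t^2\bigr].
\]
If $t^2\leq 1/2$, both summands are non-negative and the first is strictly positive, so $N>0$ at once. For $t^2>1/2$ the bracket equals $(1-t^2)-k^2(2t^2-1)$: if $k^2(2t^2-1)\leq 1-t^2$ this bracket is still non-negative and we are done. The remaining nontrivial case is $t^2>1/2$ together with $k^2>(1-t^2)/(2t^2-1)$, where the two summands have opposite signs and a magnitude comparison is required.

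In this case both sides of the required inequality $bk(1+k^2+t^2)\sqrt{1-t^2}>a\bigl[k^2(2t^2-1)-(1-t^2)\bigr]$ are strictly positive, so squaring is faithful. I would then substitute the strict admissibility estimate $b^2k^2(1-t^2)>a^2(t^2+k^2)^2/4$ into the left-hand side; after cancelling $a^2$, the problem reduces to the purely algebraic inequality
\[
(t^2+k^2)(1+k^2+t^2)\;\geq\;2\bigl[k^2(2t^2-1)-(1-t^2)\bigr].
\]
Setting $u=1+k^2$ and $v=t^2$, a direct expansion shows that the difference between the two sides is exactly $(u-v)^2+u+v$, which is strictly positive since $u\geq 1$ and $v\geq 0$.

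The hard part, such as it is, is the bookkeeping in this last step: one must verify that both sides are strictly positive in the reduced case so that squaring preserves order, and then chain together the strict admissibility inequality and the strict identity $(u-v)^2+u+v>0$ to obtain the strict conclusion $N>0$ rather than merely $N\geq 0$. The hypothesis $\alpha<1.2$ does not enter the polynomial identity; it serves only to delimit the range complementary to Proposition \ref{proposition 1}, so that Propositions \ref{proposition 1} and this one together exhaust $\alpha\in(0,\pi/2)$.
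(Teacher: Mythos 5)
Your proof is correct, and it takes a genuinely different route from the paper's. The paper first invokes Lemma \ref{Admissible pairs lemma 1} to get the single necessary condition $|t|\leq b$, uses it to bound $\sqrt{1-t^2}\geq a$ in \eqref{numerator}, and then reduces the problem to a quadratic inequality $b x^2-x+2b\geq 0$ in $x=\shr$; the discriminant of that quadratic is negative precisely when $b>1/(2\sqrt 2)$, which is where the hypothesis $\alpha<1.2$ (really $\alpha<1.21$) enters. You instead keep the \emph{coupled} admissibility constraint $a^2(t^2+k^2)^2<4b^2k^2(1-t^2)$ — which I have checked is the correct reduction of $|x|^2<b^2$ under \eqref{ellpt-coord-def} with $\sin\phi=-\sqrt{1-t^2}$ — and your case split plus the identity $(u+v-1)(u+v)-2(2uv-u-v)=(u-v)^2+u+v$ all verify. (Two small points worth making explicit: the strict positivity of the first summand uses $k=\shr>0$, guaranteed by $L>2a$, and $t^2<1$, which follows from admissibility itself; and only the implication admissible $\Rightarrow$ inequality is needed, so the sign discussion for $\sin\phi$ is harmless.) What your approach buys is substantial: since the hypothesis $\alpha<1.2$ is never used, you have shown $N>0$ for all admissible pairs for every $\alpha\in(0,\pi/2)$, which subsumes Proposition \ref{proposition 1} entirely and removes the need to split the parameter range; the price is a somewhat heavier squaring-and-substitution computation in the one nontrivial sub-case, where the paper's decoupled estimates keep each step short at the cost of a genuine restriction on $\alpha$.
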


\begin{proof}
From Lemma \ref{Admissible pairs lemma 1}, we need to consider only those pairs $(\rho,\phi)$ for which $|t|\leq
b$, where $t=\cos\phi$.  We will show that $N$ is positive for this range
of $t$.

The term $N$ in \eqref{numerator} can be rewritten as
\Beq\label{Numerator rewritten}
N(t) =
b\sqrt{1-t^2} \shr(\chrs + t^2) + a (\chrs - t^2(\chrs +
\shrs))
\Eeq
Using the fact that $b\geq \abs{t}$, we have $\sqrt{1-t^2}\geq
\sqrt{1-b^2} = a$ and we substitute this into \eqref{Numerator rewritten} and use the fact
that $b=\sqrt{1-a^2}$ to get
\[
N(t)\geq ba\shr(\chrs +
t^2) + a (\chrs - t^2(\chrs + \shrs).
\]

Factoring $a$, we are left with wanting to show that
\[
b\shr(\chrs
+t^2)+ \chrs - t^2(\chrs + \shrs)\geq 0.
\]
Isolating $t^{2}$, we need to show that
\bel{est3}\frac{(b\shr+1)\chrs}{\chrs+\shrs - b\shr} \geq t^2.
\ee
(Note that the term $\chrs+\shrs - b\shr$ is always positive.)
We show that for $b\geq \frac{1}{2\sqrt{2}}$, the left hand side of \eqref{est3} is greater than $1$.
Solving the inequality by let $x=\shr$ we get the quadratic inequality
\[
bx^2 - x + 2b \geq 0\qquad \text{ with roots }\ x =
\frac{1\pm \sqrt{1-8b^2}}{2b}.
\] There are no roots if $1<8b^2$ or
$1/(2\sqrt{2})<b$.  Since $b=\cos \alpha$, this corresponds to $\alpha < 1.21$ radians.
\begin{comment}
% which
%is wider than $\pi/4$.
\end{comment}
\end{proof}

We have now proved Claim \ref{lemma:deriv}, and from the discussion
following this claim, we have that $\Pi_L$ is an injective immersion:
the Bolker Assumption holds.\end{proof}

Now that we know the Bolker Assumption holds, as mentioned after
Definition \ref{def:Bolker}, the projections $\Pi_L$ and $\Pi_R$ map
away from $\zero$.  Therefore, $\Rc$ is a Fourier integral operator
\cite{Tre}.  Since the measure of integration on the ellipses,
arc length, is nowhere zero in $\Db$, $\Rc$ is elliptic.
The order of $\Rc$ is given by $(\dim(Y)-\dim(Z))/2$ 
(see e.g., \cite[Theorem 1]{Gu1985} which gives the order of $\Rc^{*}\Rc$).
In our case, $Z$ has dimension $3$ and $Y$ has dimension $2$, hence $\Rc$
has order $-1/2$.

% (see e.g., \cite[Theorem 1]{Gu1985}).
\begin{comment}
%Duistermaat, Page 41, Equation 2.4.22 as well

\tc{I used Guillemin for the order of $\Rc$.  Should I explain that
Guillemin really states that $R^*R$ has order $\dim(Y)-\dim(Z)$ so $R$
has dimension that over 2?}
\end{comment}

\end{proof}

\begin{proof}[Proof of Corollary \ref{cor:R*R}] The proof follows from
  Guillemin's result \cite[Theorem 1]{Gu1985} as a consequence of
Theorem \ref{thm:bolker}. We will outline his proof that $\Rst\Rc$ is
an elliptic pseudodifferential operator since the proof for our
transform is simple and instructive. 

By Theorem \ref{thm:bolker}, $\Rc$ is an elliptic Fourier integral
operator associated with $\cC$.  By the standard calculus of FIO,
$\Rst$ is an elliptic FIO associated to $\cC^t$.  Note that we can
compose $\Rst$ and $\Rc$ because $\Rc^*$ integrates over $\otp$ (in
general, because $\pi_R$ is proper).
\begin{comment}
\tc{Is the above text OK? right? We now explain why $\cR$ is elliptic
in the proof of Theorem \ref{thm:bolker}}
\end{comment}
\begin{comment}
By Theorem \ref{thm:bolker}, $\Rc$ and $\Rst$ are defined by a double
fibration satisfying Guillemin's conditions and therefore $\Rc$ is a
Fourier integral distribution associated with integration over $Z$.
As noted after Definition \ref{def:Bolker}, this implies $\Rc$ and
$\Rc^*$ are FIOs.  \tred{Furthermore, because the measure of
integration for $\Rc$ and hence $\Rst$ are nowhere zero, these
operators are elliptic.} Note that we can compose $\Rst$ and $\Rc$
because $\Rc^*$ integrates over $\otp$.
\end{comment}

Because the Bolker Assumption holds above $\Db$, $\cC$ is a local
canonical graph and so the composition $\Rc^* \Rc$ is a FIO for
functions supported in $\Db$.  Now, because of the injectivity of
$\Pi_Y$, $\cC^t\circ \cC\subset\Delta$ where $\Delta $ is the diagonal
in $(T^*(\Db)\smo)^2$ by the clean composition of Fourier integral
operators \cite{DG}.  

To show $\cC=\Delta$, we need to show $\Pi_R:\cC\to T^*(\Db)\smo$ is
surjective.  This will follow from \eqref{def:C} and a geometric
argument.  Let $(x,\xi)\in T^*(\Db)\smo$.  We now prove there is a
$(s,L)\in Y$ such that the ellipse $E(s,L)$ is conormal to $(x,\xi)$.
First note that any ellipse $E(s,L)$ that contains $x$ must have
$L=|x-\g_R(s)|+|x-\g_T(s)|$.  As $s$ ranges from $0$ to $2\pi$ the
normal line at $x$ to the ellipse $E(s,|x-\g_R(s)|+|x-\g_T(s)|)$ at
$s$ rotates completely around $2\pi$ radians and therefore for some
value of $\so\in [0,2\pi]$ $E(\so,|x-\g_R(\so)|+|x-\g_T(\so)|)$ must
be conormal $(x,\xi)$.  Since the ellipse is given by the equation
$L=|x-\g_R(s)|+|x-\g_T(s)|$, its gradient is normal to the ellipse at
$x$; conormals co-parallel this gradient are exactly of the form $ \xi
= \omega\left(\frac{x-\gr(\so)}{|x-\gr(\so)|}
+\frac{x-\gt(\so)}{|x-\gt(\so)|}\right)\dx$ for some $\omega\neq 0$.
Using \eqref{def:C}, we see that for this $\so$, $x$, $\omega$ and
$L=|x-\g_R(\so)|+|x-\g_T(\so)|$, there is a $\lambda\in \cC$ with
$\Pi_R(\lambda) =(x,\xi)$.  This finishes the proof that $\Pi_R$ is
surjective.  Note that one can also prove this using the fact that
$\pi_R$ is a fibration (and so a submersion) and a proper map, but our
proof is elementary.  This shows that $\Rc^*\Rc$ is an elliptic
pseudodifferential operator viewed as an operator from $\Ec'(D_{b})\to
\Dc'(D_{b})$.  Because the order of $\Rst$ and $\Rc$ are $-1/2$, the
order of $\Rst\Rc$ is $-1$.
\end{proof}

\section{Acknowledgements}

All authors thank the American Mathematical Society for organizing the
Mathematical Research Communities Conference on Inverse Problems that
encouraged our research collaboration.  The first and third author
thank MSRI at Berkeley for their hospitality while they discussed
these results.  The first author was supported in part by DOD CDMRP
Synergistic Idea Award BC063989/W81XWH-07-1-0640, by Norman
Hackerman Advanced Research Program (NHARP) Consortium Grant
003656-0109-2009 and by NSF grant DMS-1109417. The second
author was supported in part by NSF Grants DMS-1028096 and DMS-1129154
(supplements to the third author's NSF Grant DMS-0908015) and
DMS-1109417. Additionally he thanks Tufts University for providing an
excellent research environment and the University of Bridgeport for
the support he received as a faculty member there. The third author
was supported in part by NSF Grant DMS-0908015.
\bibliographystyle{plain}

\end{document}